\documentclass[reqno,12pt]{amsart}

\NeedsTeXFormat{LaTeX2e}[1994/12/01]

\textheight24.5cm \textwidth16cm

\addtolength{\topmargin}{-50pt}
\addtolength{\oddsidemargin}{-1.8cm}
\addtolength{\evensidemargin}{-1.8cm}

\usepackage{amsmath}
\usepackage{amsfonts}
\usepackage{amssymb}
\usepackage{eufrak}
\usepackage{amscd}
\usepackage{amsthm}
\usepackage{epsfig}
\usepackage{amstext}
\usepackage[all]{xy}

\theoremstyle{plain}

 \newtheorem{theorem}{Theorem}
 \newtheorem{example}[theorem]{Example}

 \newtheorem{proposition}[theorem]{Proposition}
 \newtheorem{corollary}[theorem]{Corollary}
 \newtheorem{lemma} [theorem] {Lemma}
 \newtheorem{remark}[theorem]{Remark}
 
  \newtheorem{definition}[theorem]{Definition}

 \begin{document}

\title{On subspaces of invariant vectors}
\author{Tatiana Shulman}

\begin{abstract}

Let $X_{\pi}$ be the subspace of fixed vectors for a uniformly bounded representation $\pi$ of a group $G$ on a Banach space $X$. We study the problem of the existence  and uniqueness of a subspace  $Y$ that complements $X_{\pi}$ in $X$. Similar questions for $G$-invariant complement to $X_{\pi}$ are considered.  We prove that every non-amenable discrete group $G$ has a representation with non-complemented $X_{\pi}$ and find some conditions that provide an $G$-invariant complement. A special attention is given to representations on $C(K)$ that arise from an action of $G$ on a metric compact $K$.

\end{abstract}

\address{INSTITUTE OF MATHEMATICS OF THE POLISH ACADEMY OF SCIENCES,
POLAND}

\email{tshulman@impan.pl}

\subjclass[2000]{22A25; 46B99; 22D25}

\keywords {uniformly bounded representations, continuous group actions, complemented subspaces, fixed point theorems}

\maketitle

\section*{Introduction}

The subspaces of vectors which are invariant under group representations recently got  new attention because of their use in the Banach space version of Kazhdan's property (T) (see \cite{Monod}, \cite{Piotr}). In the Hilbert space case, arguments used for studying property (T) rely heavily on the existence of orthogonal complements of subspaces (of invariant vectors). In Banach space setting the lack of orthogonality immediately causes difficulties. It is not even clear if the subspace of invariant vectors is always complemented, as mentioned in \cite{Piotr}. However if a Banach space is superreflexive, then for any uniformly bounded representation, the subspace of invariant vectors is complemented (\cite{Monod}). Moreover there is a complement  which is invariant under the representation. Namely it is proved in \cite{Monod} that if $\pi$ is a uniformly bounded representation of a group $G$ on a superreflexive space $X$, then $X$ decomposes  into the sum $$X = X_{\pi} \oplus Ann (X^*_{\bar\pi}),$$ where $X_{\pi}$ is the subspace of invariant vectors, $X^*_{\bar\pi}$ is the subspace of vectors invariant under the dual representation $\bar\pi$ and  $Ann (X^*_{\bar\pi})$ is its preannihilator. It is easy to check that $Ann (X^*_{\bar\pi})$ is $G$-invariant.

In this note we show that indeed the subspace of invariant vectors need not be complemented. Moreover, in section 1 we prove that each non-amenable group admits an isometric representation such that the subspace of invariant vectors is not complemented (Theorem \ref{notcomplemented}).

In section 2 we study the decomposition $X = X_{\pi} \oplus Ann (X^*_{\bar\pi})$,  and more generally, the question of the existence and uniqueness of an invariant complement to $X_{\pi}$ .
 By simple use of fixed point theorems we prove that the decomposition $X = X_{\pi} \oplus Ann (X^*_{\bar\pi})$ holds in some important cases, for instance for   representations of compact groups (Theorem \ref{compactgroup}) and for  uniformly bounded representations on reflexive Banach spaces (Theorem \ref{refl}) which generalizes [\cite{Monod}, Prop. 2.6] since superreflexive spaces are reflexive. In these cases if the representation is isometric, then the corresponding projection onto $X_{\pi}$  has norm 1. Though in general $G$-invariant complement need not be unique (Example \ref{example}), in the cases above it is unique. We show also that for any uniformly bounded representation of an amenable group the subspaces $X_{\pi}$ and $Ann (X^*_{\bar\pi})$ have trivial intersection (Theorem \ref{amen}). For non-amenable groups it is not true in general (\cite{Monod}, ex. 2.29).

In sections 3 and 4 we focus on representations coming from group actions on compact metric spaces. Though for such representations the decomposition
$X = X_{\pi} \oplus Ann (X^*_{\bar\pi})$ need not hold in general, it is shown that it holds if the action is nice, namely Lyapunov stable (Theorem \ref{LyapunovActions}). Lyapunov stable actions were introduced in \cite{Manuilov}. It was shown in \cite{Manuilov} that if the action is Lyapunov stable, then there is a conditional expectation on the subspace (actually, subalgebra) of invariant functions.
In Theorem \ref{LyapunovActions} we give a new proof of that. Moreover we construct a conditional expectation commuting with the representation and show that such an expectation is unique. Along the way we give a short proof of an assertion in \cite{Manuilov} on the uniqueness of invariant measures.
 In section 4 we introduce lower semi-continuous actions, which is a wider class of actions than Lyapunov stable ones.  We show that for lower semi-continuous actions the subspace of invariant functions is complemented.

\bigskip

\noindent {\bf Acknowledgements.} The author would like to thank Piotr Nowak for bringing the topic to her attention  and for helpful discussions.

This research was partially supported
by the Polish National Science Centre grant under the contract number DEC-
2012/06/A/ST1/00256.

\section{The subspace of invariant vectors need not be complemented}

Let $G$ be a topological group and $X$ be a Banach space.  By  a representation of $G$ on $X$ we will mean a strongly continuous homomorphism from $G$ into the group $B(X)$ of bounded invertible operators on $X$. A representation $\pi$ is isometric if $\pi(g)$ is an isometry for each $g\in G$ and $\pi$ is uniformly bounded if $\sup_{g\in G} \|\pi(g)\| < \infty$.

Below for any family $S\subseteq B(H)$ of operators on a Hilbert space by $S'$ we denote its commutant, that is $S' = \{T \in B(H) \;|\; TA = AT, \;\text{for any} \;A\in S \}$.
Recall that the group von Neumann algebra  $L(G)$ of a group $G$ is $$L(G) = \{\rho(g)\;|\;g\in G\}'',$$ where $\rho$ is the left regular representation  of $G$.

\begin{theorem}\label{notcomplemented} Any discrete non-amenable group admits an isometric representation such that the set of invariant vectors is not complemented.
\end{theorem}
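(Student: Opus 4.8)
The plan is to exploit the characterization of non-amenability via the left regular representation. Recall that a discrete group $G$ is non-amenable if and only if the left regular representation $\rho$ on $\ell^2(G)$ has no almost-invariant vectors, equivalently (by a classical result) the trivial representation is not weakly contained in $\rho$. In fact the cleanest handle is this: $G$ is non-amenable if and only if there is no $G$-invariant mean on $\ell^\infty(G)$, which in functional-analytic terms says that the constant function cannot be approximated in an appropriate sense by convex combinations of translates coming from $\ell^1$. I would look for a representation $\pi$ whose space $X$ contains a copy of the constants (the invariant vectors) whose complementation would force the existence of exactly such an invariant mean, yielding a contradiction with non-amenability.

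Concretely, I would first build the representation so that $X_\pi$ is one-dimensional, spanned by a distinguished invariant vector, since complementation of a one-dimensional subspace is equivalent to the existence of a continuous $G$-invariant linear functional that does not vanish on that vector. The natural candidate space is built from $\ell^1(G)$ or $\ell^2(G)$ with $G$ acting by translation, augmented so that the constants sit inside as genuine vectors. A convenient model: take $X$ to be a suitable completion or extension in which the constant function $\mathbf{1}$ is an invariant vector, and note that a $G$-invariant bounded projection $P:X\to X_\pi$ would produce, via its adjoint applied to the evaluation at $\mathbf{1}$, a $G$-invariant element of $X^*$ taking value $1$ on $\mathbf{1}$. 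The crux is to arrange $X$ so that any such invariant functional would restrict to an invariant mean on $\ell^\infty(G)$, which is forbidden.

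The main technical step, and the one I expect to be the obstacle, is choosing the Banach space and the embedding of the constants so that (i) the representation is genuinely \emph{isometric}, not merely uniformly bounded, and (ii) the nonexistence of an invariant complement translates \emph{exactly} into the nonexistence of an invariant mean. The isometry requirement rules out many ad hoc averaging constructions, so I anticipate needing the space $\ell^1(G)$ (or $c_0(G)/C(G)$-type quotients) where translation is manifestly isometric, and then carefully identifying the predual/dual pairing under which invariant functionals correspond to means. A delicate point is that complementation a priori only gives a \emph{bounded} projection, not an invariant one; so I must first reduce to the invariant case, either by averaging the projection over the group (impossible directly without amenability) or by arguing that in the specific geometry chosen, \emph{any} bounded projection onto $X_\pi$ already yields the forbidden invariant functional through a weak-$*$ compactness or fixed-point argument.

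Assembling these pieces, the proof would run: construct the explicit isometric representation $\pi$ on the chosen space $X$ with $X_\pi$ the line of constants; assume for contradiction that $X_\pi$ is complemented by a bounded projection $P$; extract from $P$ a bounded linear functional $\varphi\in X^*$ with $\varphi(\mathbf{1})=1$ and $\varphi\circ\pi(g)=\varphi$ for all $g\in G$; identify $\varphi$ with a translation-invariant mean on the relevant function space on $G$; and invoke non-amenability to derive the contradiction. The reduction from a bounded projection to an \emph{invariant} functional is where I expect the real work to lie, and I would be prepared to engineer the space precisely so that this reduction is automatic, perhaps by taking $X$ to be a translation-invariant space in which the annihilator of $X_\pi$ has no invariant complement unless a mean exists.
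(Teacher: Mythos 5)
Your plan has a fatal structural flaw, and it lies exactly at the point you flagged as ``where I expect the real work to lie.'' You propose to engineer the representation so that $X_{\pi}$ is one-dimensional (the line spanned by $\mathbf{1}$), and then to show that the existence of \emph{any} bounded projection onto $X_{\pi}$ forces an invariant mean on $\ell^\infty(G)$. But a one-dimensional (indeed any finite-dimensional) subspace of a Banach space is \emph{always} complemented: pick $f\in X^*$ with $f(\mathbf{1})=1$ by Hahn--Banach and set $Px=f(x)\mathbf{1}$. So with $\dim X_{\pi}=1$ the conclusion of the theorem --- that $X_{\pi}$ is not complemented --- is simply false for your space, whatever space you choose. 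Equivalently, the reduction you hope for (``any bounded projection already yields the forbidden invariant functional'') cannot exist: bounded projections onto the constants always exist, invariant means do not exist for non-amenable $G$, so the implication would yield a contradiction for every non-amenable group, i.e.\ no ``specific geometry'' can make it automatic. What your construction can at best show is that $X_{\pi}$ has no $G$-\emph{invariant} complement (equivalently, no equivariant projection), which is a genuinely weaker statement than the theorem and is indeed the classical mean-based characterization of non-amenability.

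The way out, and the route the paper takes, is to make the fixed-point space infinite-dimensional and to use an obstruction that rules out \emph{arbitrary} bounded projections, not merely equivariant ones. Concretely, the paper takes $H=\ell^2(G)$, $\tilde\rho=\rho\otimes Id$ on $H\otimes H$, and the (isometric) conjugation representation $\pi(g)x=\tilde\rho(g)x\tilde\rho(g)^{-1}$ on $X=B(H\otimes H)$. The invariant vectors then form the commutant $N=(L(G)\otimes Id)'=L(G)'\otimes B(H)$, a von Neumann algebra which is non-injective precisely because $G$ is non-amenable, and which satisfies $M_2(N)\cong N$. A lemma of Haagerup and Pisier then says that a non-injective von Neumann algebra with this $2\times 2$ matrix stability is not the range of any bounded projection on $B(H\otimes H)$. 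The operator-algebraic input (injectivity, the Haagerup--Pisier lemma) is exactly what converts the group-theoretic hypothesis into an obstruction against all projections; your purely functional-analytic pairing with means cannot do this, because the Hahn--Banach theorem will always defeat it in finite dimensions.
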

\begin{proof} Let $G$ be a non-amenable group, $H = l_2(G)$ and $\rho: G \to B(H)$ --- the left regular representation. Define a representation $\tilde\rho:G \to B(H\otimes H) $ by $$\tilde \rho(g) = \rho(g) \otimes Id.$$ Let $X = B(H\otimes H)$. Define a representation $\pi: G \to B(X)$ by $$\pi(g)x = \tilde \rho(g)x\tilde \rho(g)^{-1},$$
$x\in X$. Let $N$ be the set of $\pi$-invariant vectors. Then \begin{multline}N = \{x\in X \;|\;  \tilde \rho(g)x \tilde \rho(g)^{-1} = x, \forall g\in G\} = \\ \{\tilde \rho(g) \;|\; g\in G\}' = \left\{\rho(g)\otimes Id \;|\; g\in G\right\}' = \\ \left\{ T \otimes Id \;|\; T\in L(G)\right\}' = \left(L(G)\otimes  Id\right)'.\end{multline}
 As is well known a von Neumann algebra is injective if and only if  its commutant is injective (\cite{Connes}).  Since $G$ in non-amenable, $L(G)$ is non-injective (\cite{BrownOzawa}).  Since injectivity is preserved by $\ast$-isomorphisms, $L(G)\otimes Id$ is non-injective either and we conclude that $N$ is non-injective.
  Since $$\left(L(G)\otimes  Id\right)' = L(G)'\otimes B(H), $$ we have $$M_2(N) \cong N.$$ By [\cite{HaagerupPisier}, lemma 4.6], $N$ is not complemented in $X$.

\end{proof}

\bigskip

{\bf Question}: {\it Does there exist a group which admits a uniformly bounded representation on a separable Banach space such that the set of invariant vectors is not complemented?}

\medskip

 {\bf Question}: {\it Does there exist an amenable group which admits a uniformly bounded representation such that the set of invariant vectors is not complemented?}

\section{On the decomposition $X = X_{\pi} \oplus Ann (X^*_{\bar\pi})$}

For a representation $\pi$ one can define its adjoint representation $\bar\pi$ of $G$ on the dual space $X^*$ by $$(\bar\pi(g) f) (x) = f(\pi(g^{-1})x),$$ $x\in X, f\in X^*$.

For a subspace $Y\subseteq X$ by $Ann \;Y$ we will denote its annihilator in $X^*$, that is $$Ann \;Y = \{f\in X^*\;|\; f(x)=0 \;\text{for each}\; x\in Y\}.$$
For a subspace $Y\subseteq X^*$ its preannihilator in $X$ we will denote also by $Ann \;Y$  $$Ann \;Y = \{x\in X\;|\; f(x)=0 \;\text{for each}\; f\in Y\},$$ since it always will be clear from the context what we mean.

Let $X_{\pi}$ be the subspace of $\pi$-invariant vectors  $$X_{\pi} = \{ x\in X \;|\; \pi(g)x= x \;\text{for all} \; g\in G\}.$$

\begin{lemma}\label{1} If $X^*_{\bar\pi} + Ann (X^{**}_{\overline {\overline \pi}})$ is $\ast$-weakly dense in $X^*$, then $X_{\pi}\cap Ann (X^*_{\bar\pi}) ~= ~\{0\}$.
\end{lemma}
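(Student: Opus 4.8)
The plan is to take an arbitrary $x \in X_{\pi}\cap Ann(X^*_{\bar\pi})$ and show it is zero by testing it against all of $X^*$ through the canonical embedding $X \hookrightarrow X^{**}$. Write $\hat x \in X^{**}$ for the image of $x$, so that $\hat x(f) = f(x)$ for $f\in X^*$. The crucial feature of $\hat x$ is that, viewed as a functional on $X^*$, it is $\ast$-weakly continuous, precisely because it comes from an element of the predual $X$; this is what will allow the density hypothesis to be exploited at the end.

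First I would check that $\hat x$ is invariant under the second adjoint representation, that is $\hat x \in X^{**}_{\overline{\overline\pi}}$. This is a direct computation from the definitions: for $g\in G$ and $f\in X^*$ one has $(\overline{\overline\pi}(g)\hat x)(f) = \hat x(\bar\pi(g^{-1})f) = (\bar\pi(g^{-1})f)(x) = f(\pi(g)x) = f(x) = \hat x(f)$, where the penultimate equality uses $x \in X_{\pi}$. Hence $\overline{\overline\pi}(g)\hat x = \hat x$ for every $g\in G$.

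Next I would verify that $\hat x$ annihilates the whole subspace $X^*_{\bar\pi} + Ann(X^{**}_{\overline{\overline\pi}})$. On the first summand this is exactly the assumption $x \in Ann(X^*_{\bar\pi})$, which says $\hat x(f) = f(x) = 0$ for every $f \in X^*_{\bar\pi}$. On the second summand, $Ann(X^{**}_{\overline{\overline\pi}})$ is by convention the preannihilator in $X^*$ of $X^{**}_{\overline{\overline\pi}}$, so every $f$ in it satisfies $\phi(f) = 0$ for all $\phi \in X^{**}_{\overline{\overline\pi}}$; applying this to $\phi = \hat x$, which lies in $X^{**}_{\overline{\overline\pi}}$ by the previous step, gives $\hat x(f) = 0$. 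Thus $\hat x$ vanishes on the sum.

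Finally, the $\ast$-weak density hypothesis closes the argument: $\hat x$ is a $\ast$-weakly continuous functional on $X^*$ that vanishes on a $\ast$-weakly dense subspace, so it vanishes identically, i.e. $f(x) = 0$ for all $f\in X^*$ and hence $x = 0$. The only point requiring care is the identification of $Ann(X^{**}_{\overline{\overline\pi}})$ as a subset of $X^*$ under the preannihilator convention, together with the observation that $\hat x$ is \emph{itself} one of the invariant vectors being annihilated; once these are in place the density hypothesis does all the work, so I do not anticipate any serious obstacle.
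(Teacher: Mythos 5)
Your proof is correct and is exactly the argument the paper intends: the paper's one-line proof ("$X_{\pi}\cap Ann(X^*_{\bar\pi})$ annihilates $X^*_{\bar\pi} + Ann(X^{**}_{\overline{\overline\pi}})$") is precisely your computation that $\hat x$ is $\overline{\overline\pi}$-invariant, kills both summands, and, being $\ast$-weakly continuous, must vanish on all of $X^*$ by density. You have simply filled in the details the paper leaves to the reader.
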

\begin{proof} It is easy to see that $X_{\pi}\cap Ann (X^*_{\bar\pi})$ annihilates the subspace  $X^*_{\bar\pi} + Ann (X^{**}_{\overline {\overline \pi}})$.
\end{proof}

\begin{lemma}\label{2} Let $\pi$ be a representation of a group $G$ on a Banach space $X$.

(i) Suppose that for any $x\in X$, the closed convex hull $E(x)$ of its orbit $O(x)$ contains an invariant vector. Then $$X = X_{\pi} + Ann (X^*_{\bar\pi}).$$

(ii) Suppose that for any $f\in X^*$, the $\ast$-weakly closed convex hull $E_w(f)$ of $O(f)$ contains an invariant vector. Then $$X^* = X^*_{\bar\pi} + Ann (X_{\pi}).$$
\end{lemma}\label{L4}
\begin{proof} (i) By the assumption, for $x\in X$, there is an invariant vector $x_0\in E(x)$. Let $f\in X^*_{\bar \pi}$. Then $f$ is constant on $O(x)$ and hence on $E(x)$. Hence $f(x-x_0)=0$. Thus $x-x_0\in Ann (X^*_{\bar\pi})$ and
 $X = X_{\pi} + Ann (X^*_{\bar\pi}).$

 (ii) For $f\in X^*$, let $f_0$ be an invariant functional in $E_w(f)$. Let $x\in X_{\pi}$, then $f(x) = f_0(x)$, and therefore $f-f_0 \in Ann(X_{\pi})$, $X^* = X^*_{\bar\pi} + Ann (X_{\pi})$.
\end{proof}

\begin{theorem}\label{refl} For any uniformly bounded representation $\pi$ on a reflexive Banach space $X$, $$X = X_{\pi} \oplus Ann (X^*_{\bar\pi}).$$
The corresponding projection onto $X_{\pi}$ has norm less or equal to $\sup_{g\in G} \|\pi(g)\|$.
\end{theorem}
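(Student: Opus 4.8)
The plan is to obtain both halves of the decomposition from the two preceding lemmas, so that the real work reduces to producing, for each vector, an invariant vector inside the closed convex hull of its orbit. Fix $C = \sup_{g\in G}\|\pi(g)\|$. Given $x\in X$, the orbit $O(x)$ lies in the ball of radius $C\|x\|$, so its closed convex hull $E(x)$ is nonempty, bounded, closed and convex; since $X$ is reflexive, $E(x)$ is weakly compact. Each $\pi(g)$ is a bounded linear operator, hence weakly continuous and affine, and it permutes $O(x)$ and therefore maps $E(x)$ into itself. To get a common fixed point I would first renorm $X$ by $\|x\|' = \sup_{g\in G}\|\pi(g)x\|$: this is equivalent to the original norm (with constants $1$ and $C$), leaves the weak topology and reflexivity unchanged, and makes every $\pi(g)$ an isometry. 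The Ryll--Nardzewski fixed point theorem then applies to the group $\{\pi(g)\}$ of weakly continuous affine isometries of the weakly compact convex set $E(x)$ and yields $x_0\in E(x)$ with $\pi(g)x_0 = x_0$ for all $g$, i.e. $x_0\in X_{\pi}\cap E(x)$. By Lemma \ref{2}(i) this gives $X = X_{\pi} + Ann(X^*_{\bar\pi})$.

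For triviality of the intersection I would run the same argument on the dual. Since $X$ is reflexive, so is $X^*$, and $\bar\pi$ is uniformly bounded on it with the same constant $C$; applying Ryll--Nardzewski exactly as above (now the weak-$\ast$ and weak topologies on $X^*$ coincide, and each $\bar\pi(g)$, being an adjoint, is weak-$\ast$ continuous) produces for each $f\in X^*$ an invariant functional in the weak-$\ast$ closed convex hull of $O(f)$. Lemma \ref{2}(ii) then gives $X^* = X^*_{\bar\pi} + Ann(X_{\pi})$. Because $X$ is reflexive we may identify $X^{**}$ with $X$ and $\overline{\overline\pi}$ with $\pi$, so $X^{**}_{\overline{\overline\pi}} = X_{\pi}$ and $Ann(X^{**}_{\overline{\overline\pi}}) = Ann(X_{\pi})$; hence $X^*_{\bar\pi} + Ann(X^{**}_{\overline{\overline\pi}}) = X^*$ is (trivially) $\ast$-weakly dense in $X^*$, and Lemma \ref{1} yields $X_{\pi}\cap Ann(X^*_{\bar\pi}) = \{0\}$. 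Combining the two parts gives the direct sum $X = X_{\pi}\oplus Ann(X^*_{\bar\pi})$.

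For the norm of the projection $P$ onto $X_{\pi}$: the element $x_0$ constructed in the first paragraph is the unique component of $x$ in $X_{\pi}$, so $Px = x_0\in E(x)$. As $E(x)$ is contained in the closed ball of radius $C\|x\|$, we get $\|Px\| = \|x_0\| \le C\|x\| = \sup_{g\in G}\|\pi(g)\|\,\|x\|$, whence $\|P\|\le \sup_{g\in G}\|\pi(g)\|$. In the isometric case $C = 1$, giving $\|P\| = 1$ whenever $X_{\pi}\neq\{0\}$.

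The main obstacle is the production of the common fixed point for a general, possibly non-amenable, group: a commuting-family result such as Markov--Kakutani is unavailable here, and it is precisely the noncontracting hypothesis of Ryll--Nardzewski that must be secured. The renorming step is what makes that hypothesis hold, after which weak compactness coming from reflexivity supplies the remaining input; everything else is a routine bookkeeping of annihilators via Lemmas \ref{1} and \ref{2}.
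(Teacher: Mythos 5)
Your proposal is correct and follows essentially the same route as the paper: renorm via $\|x\|' = \sup_{g\in G}\|\pi(g)x\|$ to make $\pi$ isometric, apply Ryll--Nardzewski to the weakly compact convex hull $E(x)$ of each orbit, feed the resulting invariant vectors into Lemmas \ref{1} and \ref{2}, and bound $\|P\|$ by noting $Px\in E(x)$. The only (inessential) difference is that you pass to the dual via Lemma \ref{2}(ii) plus the reflexive identification $X^{**}_{\overline{\overline\pi}} = X_{\pi}$, whereas the paper applies Lemma \ref{2}(i) directly to $\bar\pi$ on the reflexive space $X^*$.
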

\begin{proof}  Define an equivalent norm on $X$ by $$\|x\|' = \sup_{g\in G} \|\pi(g)x\|.$$ With respect to this norm $\pi$ is isometric.
So we can assume from the beginning that $\pi$ is an isometric representation on  a reflexive space $X$.

\noindent Let $x\in X$. Let $O(x)$ and $E(x)$ be as in Lemma \ref 2. Since $X$ is reflexive,  $E(x)$ is a convex weakly compact invariant subset. By Ryll-Nardzewski theorem in $E(x)$ there is an invariant vector. By Lemma  \ref{2}, $X = X_{\pi} + Ann (X^*_{\bar\pi}).$ Since $X^*$ is also reflexive, we in the same way obtain $X^* = X^*_{\bar\pi} + Ann (X^{**}_{\overline {\overline \pi}})$. By Lemma \ref 1, $X = X_{\pi} \oplus Ann (X^*_{\bar\pi}).$

 \noindent Let $P$ be the corresponding projection onto $X_{\pi}$.  By the proof of Lemma \ref {2}, for any $x\in X$, $Px$ belongs to the closed convex hull of the orbit of $x$. Hence $\|Px\|\le sup_{g\in G}\|\pi(g)x\|$,  for any $x\in X$,  and $\|P\| \le \sup_{g\in G} \|\pi(g)\|$.
\end{proof}

\bigskip

\begin{theorem}\label{compactgroup} For any representation $\pi$ of a compact group on a Banach space $X$, $$X = X_{\pi} \oplus Ann (X^*_{\bar\pi}).$$ If $\pi$ is isometric, then the corresponding projection onto $X_{\pi}$ has norm 1.
\end{theorem}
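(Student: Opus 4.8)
The plan is to construct the averaging projection directly via the normalized Haar measure $dg$ on the compact group $G$, rather than routing through the fixed point theorem as in Theorem \ref{refl}. First I would record that a strongly continuous representation of a compact group is automatically uniformly bounded: for each $x$ the map $g\mapsto \pi(g)x$ is continuous on the compact space $G$ and so has bounded range, whence the uniform boundedness principle gives $M:=\sup_{g\in G}\|\pi(g)\|<\infty$. Strong continuity also makes $g\mapsto\pi(g)x$ a continuous, hence Bochner integrable, $X$-valued function, so I can set
$$ Px=\int_G \pi(g)x\,dg. $$
The estimate $\|Px\|\le\int_G\|\pi(g)x\|\,dg\le M\|x\|$ shows $P\in B(X)$.

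Next I would establish the algebraic properties. By left invariance of Haar measure, $\pi(h)Px=\int_G\pi(hg)x\,dg=Px$, so $Px\in X_{\pi}$; and for $x\in X_{\pi}$ the integrand is the constant $x$, giving $Px=x$. Hence $P$ is a bounded idempotent with range exactly $X_{\pi}$. By right invariance, $P\pi(h)=P$ for all $h$, from which the adjoint $P^*f$ is $\bar\pi$-invariant for every $f\in X^*$, while for invariant $f$ one computes directly that $f(Px)=f(x)$.

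Then I would identify the kernel with the annihilator. If $Px=0$, then $f(x)=f(Px)=0$ for every $f\in X^*_{\bar\pi}$, so $\ker P\subseteq Ann(X^*_{\bar\pi})$. Conversely, if $x\in Ann(X^*_{\bar\pi})$, then for any $f\in X^*$ we have $f(Px)=(P^*f)(x)=0$ because $P^*f$ is invariant; as this holds for all $f$, it forces $Px=0$. Thus $\ker P=Ann(X^*_{\bar\pi})$, and the decomposition $X=\operatorname{range}P\oplus\ker P$ becomes $X=X_{\pi}\oplus Ann(X^*_{\bar\pi})$. For the norm statement, when $\pi$ is isometric one has $\|Px\|\le\int_G\|x\|\,dg=\|x\|$, so $\|P\|\le1$, while $P$ fixes $X_{\pi}$, giving $\|P\|=1$ whenever $X_{\pi}\neq\{0\}$.

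The main technical point to handle carefully is the justification of the vector-valued integral and the invariance manipulations performed under it: confirming that strong continuity alone secures Bochner integrability, that the left/right invariance of Haar measure transfers to the integral as used above, and that $Px$ lies in the closed convex hull of the orbit $O(x)$ (so the construction is consistent with Lemma \ref{2}). Once these measure-theoretic points are pinned down, the remaining steps are routine.
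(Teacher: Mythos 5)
Your proof is correct, but it takes a genuinely different route from the paper. The paper never constructs the projection by an explicit formula: it applies Kakutani's fixed point theorem to the closed convex hull $E(x)$ of each orbit (compact because the group is compact and the representation strongly continuous) to produce an invariant vector, then invokes Lemma \ref{2} for the algebraic sum $X = X_{\pi} + Ann(X^*_{\bar\pi})$, repeats the argument for $\bar\pi$ on $X^*$, and uses Lemma \ref{1} to get directness; the norm bound comes from the observation that the resulting projection sends $x$ into the closed convex hull of $O(x)$. You instead average against normalized Haar measure, $Px = \int_G \pi(g)x\,dg$, and verify directly that $P$ is a bounded idempotent with range $X_{\pi}$ and kernel $Ann(X^*_{\bar\pi})$. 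Your integration-theoretic points all check out: strong continuity makes $g \mapsto \pi(g)x$ continuous with compact (hence separable) range, so it is Bochner integrable; compact groups are unimodular, so both invariance manipulations are legitimate; and $Px$ lies in the closed convex hull of $O(x)$ since the integral is against a probability measure, so the norm claim follows (with your correct caveat that $\|P\|=1$ requires $X_{\pi} \neq \{0\}$, a caveat that applies equally to the paper's statement). What your approach buys: an explicit formula for $P$, the identity $\pi(h)P = P\pi(h) = P$ showing at once that the complement is $G$-invariant, and independence from the fixed-point machinery and from Lemmas \ref{1} and \ref{2}. What the paper's approach buys: it runs in parallel with the reflexive case (Theorem \ref{refl}, via Ryll-Nardzewski) and with the Lyapunov-stable case (Theorem \ref{LyapunovActions}), requiring no vector-valued integration, which is why the author organizes all three results around the same pair of lemmas.
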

\begin{proof} Let $x\in X$. Let $O(x)$ and $E(x)$ be as in Lemma \ref 2. Since the group is compact, $O(x)$ is compact. Hence $E(x)$ is a convex compact invariant subset. By Kakutani's fixed point theorem in $E(x)$ there is an invariant vector. By Lemma \ref{2} $X = X_{\pi} + Ann (X^*_{\bar\pi}).$ The same applies to $\overline \pi$, so we also get $X^* = X^*_{\bar\pi} + Ann (X^{**}_{\overline {\overline \pi}})$. By Lemma \ref{1},
$$X = X_{\pi} \oplus Ann (X^*_{\bar\pi}).$$

\noindent The same argument as in Theorem \ref{refl} shows that the projection onto $X_{\pi}$ has norm 1.
\end{proof}

\bigskip

\begin{theorem}\label{amen} If $\pi$ is a uniformly bounded representation of an amenable group on a Banach space $X$ then

(i) $X^* = X^*_{\bar\pi} + Ann(X_{\pi}),$

(ii) $X_{\pi}\cap  Ann (X^*_{\bar\pi}) =  \{0\}.$
\end{theorem}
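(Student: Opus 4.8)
The plan is to exploit amenability through an invariant mean, proving (i) by averaging an arbitrary functional over the orbit and then deducing (ii) as a soft consequence of (i). Since $G$ is amenable, fix a left-invariant mean $m$ on the space of bounded (left-uniformly continuous) functions on $G$, and write $C = \sup_{g\in G}\|\pi(g)\| < \infty$ for the uniform bound, so that $\bar\pi$ is uniformly bounded by $C$ as well.

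For (i), given $f\in X^*$ I would define, for each $x\in X$, the scalar function $\phi_x(g) = (\bar\pi(g)f)(x) = f(\pi(g^{-1})x)$. By strong continuity of $\pi$ (composed with inversion) this is continuous, and $|\phi_x(g)|\le C\|f\|\,\|x\|$, so $\phi_x$ is bounded and lies in the domain of $m$. Setting $f_0(x) = m(\phi_x)$ produces a linear functional with $|f_0(x)|\le C\|f\|\,\|x\|$, hence $f_0\in X^*$. The left-invariance of $m$ gives $\bar\pi(h)f_0 = f_0$ for every $h\in G$: unravelling, $(\bar\pi(h)f_0)(x) = f_0(\pi(h^{-1})x) = m_g(\phi_x(hg)) = m_g(\phi_x(g)) = f_0(x)$, where the third equality is exactly left-invariance; thus $f_0\in X^*_{\bar\pi}$. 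Finally, if $x\in X_{\pi}$ then $\phi_x$ is the constant $f(x)$, so $f_0(x) = f(x)$, which means $f-f_0$ vanishes on $X_{\pi}$, i.e. $f-f_0\in Ann(X_{\pi})$. Writing $f = f_0 + (f-f_0)$ then yields $X^* = X^*_{\bar\pi} + Ann(X_{\pi})$.

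For (ii), I would argue that any $x\in X_{\pi}\cap Ann(X^*_{\bar\pi})$ is annihilated by every functional. Indeed, given $f\in X^*$, decompose $f = f_0 + h$ with $f_0\in X^*_{\bar\pi}$ and $h\in Ann(X_{\pi})$ as furnished by (i); then $f_0(x)=0$ because $x\in Ann(X^*_{\bar\pi})$, and $h(x)=0$ because $x\in X_{\pi}$, so $f(x)=0$. Since $f$ was arbitrary, the Hahn--Banach theorem forces $x=0$.

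The substantive content is entirely in (i), and within it the only point requiring care is that the averaged functional $f_0$ genuinely lands in the right function space and correctly inherits invariance from $m$: one must ensure $\phi_x$ belongs to whatever space carries the invariant mean (trivial in the discrete case, but for a general topological group one should verify the appropriate continuity/uniform continuity via strong continuity of $\pi$), and one must match the left-invariance convention of $m$ to the translation $g\mapsto hg$ that appears in the computation of $\bar\pi(h)f_0$. Once (i) is established, (ii) is purely formal. I would not expect to need Lemma \ref{2} here, since the mean delivers the invariant functional $f_0$ directly, together with the fact that it agrees with $f$ on $X_{\pi}$, which is precisely the decomposition sought.
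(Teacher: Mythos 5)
Your proof is correct, but in part (i) it takes a genuinely different route from the paper. The paper argues via a fixed point theorem: for $f\in X^*$ the $\ast$-weakly closed convex hull $E_w(f)$ of the orbit $O(f)$ is $\ast$-weakly compact, the operators $\bar\pi(g)$ act on it $\ast$-weakly continuously, and amenability (Day's fixed point property) then yields a $\bar\pi$-invariant functional in $E_w(f)$; Lemma \ref{2}(ii) --- which you explicitly bypass --- converts the existence of such fixed points into the decomposition $X^* = X^*_{\bar\pi} + Ann(X_{\pi})$. You instead build the invariant functional directly by averaging against an invariant mean, $f_0(x) = m_g\bigl(f(\pi(g^{-1})x)\bigr)$, and check invariance and agreement with $f$ on $X_{\pi}$ by hand. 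The two arguments are close cousins --- Day's theorem is itself proved by averaging against a mean, and your $f_0$ in fact lies in $E_w(f)$ --- but yours is more self-contained and explicit, exhibiting the invariant part as a concrete average, while the paper's is shorter and keeps the argument uniform with Theorems \ref{refl} and \ref{compactgroup}, where the same Lemma \ref{2} is fed by the Ryll-Nardzewski and Kakutani fixed point theorems respectively. The caveat you flag is the genuine one, and your own estimate settles it: $\sup_g |\phi_x(hg)-\phi_x(g)| \le C\|f\|\,\|\pi(h^{-1})x - x\| \to 0$ as $h\to e$ by strong continuity, so $\phi_x$ lies in the space of uniformly continuous bounded functions carrying the mean (and for discrete $G$ there is nothing to check); the paper's route has the symmetric caveat that the fixed point theorem needs continuity of the action on $E_w(f)$, which likewise follows from strong continuity and the uniform bound. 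Part (ii) is identical in both proofs: by (i) every functional is a sum of one vanishing on $Ann(X^*_{\bar\pi})$-orthogonal grounds and one vanishing on $X_{\pi}$, so every $f\in X^*$ annihilates $X_{\pi}\cap Ann(X^*_{\bar\pi})$ and Hahn--Banach gives the conclusion.
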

\begin{proof} (i) For each $f\in X^*$, the $\ast$-weakly closed convex span $E_w(f)$ of $O(f)$ is $\ast$-weakly compact. Since all the operators $\bar\pi(g), \;g\in G$, are $\ast$-weakly continuous, it follows from amenability that $E_w(f)$ contains a fixed point of $\bar\pi$. Now by Lemma \ref{L4} we conclude that $X^* = X^*_{\bar\pi} + Ann(X_{\pi}).$

(ii) It is easy to see that $X^*_{\bar\pi} + Ann(X_{\pi})$ annihilates $X_{\pi}\cap  Ann (X^*_{\bar\pi})$.
\end{proof}

Note that the decomposition in (i) is not necessarily direct. For example if $X = l^1(\mathbb{Z})$, $G = \mathbb{Z}$ and $\pi(n)f(k ) = f(n+k)$ then $X_{\pi} = 0$, $Ann(X_{\pi}) = X^*$, $X^*_{\bar\pi}$ is the space of constant sequences. Similarly in this example $X\neq X_{\pi} +  Ann (X^*_{\bar\pi})$.







\medskip

\begin{proposition}\label{unique} If $X_{\pi}$ has a  $\pi(G)$-invariant complement $Y$, then $Y \supseteq Ann (X^*_{\bar\pi}).$ In particular, in  Theorems \ref{refl} and \ref{compactgroup} the space $X_{\pi}$ has unique $\pi(G)$-invariant complement.
\end{proposition}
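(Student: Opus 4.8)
The plan is to extract from the invariant complement $Y$ a projection onto $X_{\pi}$ that commutes with the representation, and then to analyze its adjoint. Suppose $X = X_{\pi} \oplus Y$ with $Y$ being $\pi(G)$-invariant, and let $P \colon X \to X$ be the associated projection with range $X_{\pi}$ and kernel $Y$. First I would check that $P$ commutes with every $\pi(g)$: writing $x = Px + (x-Px)$ with $Px \in X_{\pi}$ invariant and $x - Px \in Y$, the vector $\pi(g)x = Px + \pi(g)(x-Px)$ is again decomposed along $X_{\pi} \oplus Y$ because $Y$ is invariant, so $P\pi(g)x = Px = \pi(g)Px$.

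The second step is to pass to the adjoint $P^*$, a projection on $X^*$ with range $Ann(Y)$ and kernel $Ann(X_{\pi})$. Since $P$ commutes with $\pi(G)$, a direct computation from the definition of $\bar\pi$ shows that $P^*$ commutes with $\bar\pi(G)$. The key point, which I expect to be the crux of the argument, is to verify that $Ann(Y) \subseteq X^*_{\bar\pi}$. For $f \in Ann(Y)$ and $x \in X$, decomposing $x = x_0 + y$ with $x_0 \in X_{\pi}$ and $y\in Y$, one has $\pi(g^{-1})x = x_0 + \pi(g^{-1})y$ with $\pi(g^{-1})y \in Y$, whence $(\bar\pi(g)f)(x) = f(\pi(g^{-1})x) = f(x_0) = f(x)$; thus every functional annihilating $Y$ is $\bar\pi$-invariant.

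With this in hand the inclusion follows quickly. Given $y \in Ann(X^*_{\bar\pi})$, for any $f \in X^*$ we have $P^*f \in Ann(Y) \subseteq X^*_{\bar\pi}$, so $f(Py) = (P^*f)(y) = 0$ because $y$ annihilates $X^*_{\bar\pi}$. As $f$ is arbitrary, $Py = 0$, that is $y \in \ker P = Y$, proving $Y \supseteq Ann(X^*_{\bar\pi})$.

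Finally, for the uniqueness assertion, I would recall that in Theorems \ref{refl} and \ref{compactgroup} one has the direct decomposition $X = X_{\pi} \oplus Ann(X^*_{\bar\pi})$, so $Ann(X^*_{\bar\pi})$ is itself a $\pi(G)$-invariant complement. If $Y$ is any such complement, the inclusion just proved gives $Y \supseteq Ann(X^*_{\bar\pi})$; conversely, writing any $y \in Y$ as $y = x_0 + z$ along $X_{\pi} \oplus Ann(X^*_{\bar\pi})$ forces $x_0 = y - z \in X_{\pi} \cap Y = \{0\}$, so $y = z \in Ann(X^*_{\bar\pi})$. Hence $Y = Ann(X^*_{\bar\pi})$, establishing uniqueness.
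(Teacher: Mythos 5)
Your proof is correct and follows essentially the same route as the paper: both hinge on showing that the projection $P$ onto $X_{\pi}$ with kernel $Y$ commutes with $\pi(G)$, deducing that every functional vanishing on $Y$ is $\bar\pi$-invariant (i.e.\ $Ann(Y)\subseteq X^*_{\bar\pi}$), and then dualizing back to get $Y\supseteq Ann(X^*_{\bar\pi})$. The only cosmetic difference is that you carry out the last step via the adjoint projection $P^*$ and separation of points, where the paper uses the annihilator relation $Y = Ann(Y^{\bot})$; you also spell out the uniqueness argument that the paper leaves implicit, which is a welcome addition.
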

\begin{proof} Let $P$ be the projection onto $X_{\pi}$ parallel to $Y$. Since $Y$ is $\pi(G)$-invariant,  $[P, \pi(g)] = 0$, for all $g\in G$. Hence for any $x\in X$
$$P(\pi(g)x-x)= \pi(g)Px-Px =0.$$ Thus $\pi(g)x-x\in Y$.
Let $f\in X^*$ and $f|_Y=0$. Then $f(\pi(g)x-x)=0$, for all $g\in G$, $x\in X$, that is $f\in X^*_{\bar\pi}$. Hence $Y^{\bot} \subseteq X^*_{\bar\pi}$, whence $Y \supseteq Ann (X^*_{\bar\pi}).$
\end{proof}

\medskip

In general a $\pi(G)$-invariant complement need not be unique as shows Example \ref{example} below.

\section{Lyapunov stable actions}
Let $K$ be a metric compact space and let group $G$ act continuously on $K$. Define a representation $\pi$ of $G$ on $C(K)$ by $$\pi(g)\phi (x) = \phi (g^{-1}x).$$

\noindent The following example shows that the decomposition $$C(K)= C(K)_{\pi} \oplus Ann (C(K)^*_{\bar\pi})$$ does not hold in general, even when the group is abelian. It also shows that $\pi(G)$-invariant complement need not be unique.

\begin{example}\label{example} \rm  Let $K = [0, 1]$ and let $G$ be the group $\mathbb Q_+ = \{\frac{m}{n}\;|\; m, n\in \mathbb N\}$ with the usual multiplication and the discrete topology. Define a continuous action $\alpha$ of $G$ on $K$ as $$\alpha\left(\frac{m}{n}\right)x = x^{\frac{m}{n}}.$$ Define a representation $\pi$ of $G$ on $C(K)$ by $$\pi(g)\phi (x) = \phi (\alpha(g^{-1})x).$$  Let us show that $C(K)_{\pi}$ is the subspace of constant functions. For each $x\in [0, 1)$,  $0\in \overline{O(x)}$. Hence for $\phi\in C(K)_{\pi}$ and each $x\in [0, 1)$, $\phi(x)=\phi(0)$. Thus $\phi = const$. We will show now that $Ann ((C(K))^*_{\bar\pi})\subseteq C_0(0,1)$. Define  $h_i\in C(K)^*$, $i=1, 2$, as $h_1(\phi) = \phi(0)$, $h_2(\phi) = \phi(1)$, for any $\phi\in C(K)$. It is easy to see that $h_i$, $i=1, 2$, are constant on orbits of functions in $C(K)$ and hence $h_i\in (C(K))^*_{\bar\pi}$, $i=1, 2$. Hence $$ Ann ((C(K))^*_{\bar\pi}) \subseteq Ann (h_1) \cap Ann (h_2) = C_0(0, 1] \cap C_0[0, 1) = C_0(0, 1).$$ Thus $Ann ((C(K))^*_{\bar\pi})$ does not complement $C(K)_{\pi}$. However $C(K)_{\pi}$ has $\pi(G)$-invariant complements $C_0(0, 1] = \{\phi \in C(K) \; | \; \phi(0) = 0\}$ and $C_0[0, 1) = \{\phi \in C(K) \; | \; \phi(1) = 0\}$ (and many others).
\end{example}

\bigskip

However we will show that if an action is nice enough (namely, Lyapunov stable)  then the decomposition $$C(K)= C(K)_{\pi} \oplus Ann (C(K)^*_{\bar\pi})$$ holds.

\begin{definition} An action of $G$ on $K$ is {\it Lyapunov stable} if for any $\epsilon >0$ there is $\delta = \delta(\varepsilon) >0$ such that $d(gx, gy) < \varepsilon$, for all $g\in G$,  whenever $d(x, y) <\delta$.
\end{definition}

\begin{remark} \rm The original definition in \cite{Manuilov} was different: for any $x\in K$ and $\epsilon >0$ there must exist $\delta = \delta(x,\varepsilon) >0$ such that $d(gx, gy) < \varepsilon$, for all $g\in G$,  whenever $d(x, y) <\delta$. But a standard compactness argument shows that for compact $K$ the definitions coincide. Indeed for each $x\in K$, let $U_x = \{y\in K: d(y,x) < \delta(x,\varepsilon/2)/2\}$ and choose a finite subcovering $U_{x_1},...,U_{x_n}$. Let $\delta = \min_i  \delta(x_i,\varepsilon/2)/2$, $i =1,...,n$. If $d(x,y) \le \delta$, one can find $i$ with $x\in U_{x_i}$, then $d(y,x_i) \le \delta+\delta(x_i,\varepsilon/2)/2 \le \delta(x_i,\varepsilon/2)$. It follows that $d(gx,gx_i) \le \varepsilon/2$ and $d(gy,gx_i) \le \varepsilon/2$ whence $d(gx,gy) \le \varepsilon$.
\end{remark}

Below by $\pi$ we always mean the representation coming from some group action on a compact.

\begin{lemma}\label{compactorbits} Let an action of $G$ on $K$ be Lyapunov stable and $\pi$ be as above. Then for any $\phi \in C(K)$, its orbit $O(\phi)$ is precompact.
\end{lemma}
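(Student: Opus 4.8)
The plan is to invoke the Arzelà–Ascoli theorem, which for a compact metric space $K$ characterizes the precompact subsets of $C(K)$ (in the sup norm) as exactly those that are bounded and equicontinuous. Thus I would reduce the claim to showing that the orbit $O(\phi) = \{\pi(g)\phi \;|\; g\in G\}$ is uniformly bounded and equicontinuous.

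Boundedness is immediate and does not even use Lyapunov stability. Since $G$ acts by homeomorphisms, each $g^{-1}$ is a bijection of $K$, so $\|\pi(g)\phi\| = \sup_{x\in K} |\phi(g^{-1}x)| = \sup_{y\in K}|\phi(y)| = \|\phi\|$. Hence every element of the orbit has norm $\|\phi\|$, and $O(\phi)$ is bounded.

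The substance of the argument is equicontinuity, and this is where the hypothesis enters. Fix $\epsilon>0$. Since $\phi$ is continuous on the compact space $K$, it is uniformly continuous, so there is $\eta>0$ with $|\phi(u)-\phi(v)|<\epsilon$ whenever $d(u,v)<\eta$. Applying the definition of Lyapunov stability with $\eta$ in the role of $\varepsilon$, I obtain $\delta>0$ such that $d(hx,hy)<\eta$ for all $h\in G$ whenever $d(x,y)<\delta$. Now suppose $d(x,y)<\delta$. Because $G$ is a group, $g^{-1}$ ranges over all of $G$ as $g$ does, so $d(g^{-1}x,g^{-1}y)<\eta$ for every $g$, and therefore
$$|\pi(g)\phi(x)-\pi(g)\phi(y)| = |\phi(g^{-1}x)-\phi(g^{-1}y)| < \epsilon \quad\text{for all } g\in G.$$
This is precisely the equicontinuity of $O(\phi)$, and together with boundedness it yields precompactness by Arzelà–Ascoli.

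The only genuine subtlety, and the step I would single out as the main point, is the interchange of quantifiers: equicontinuity requires a \emph{single} $\delta$ that controls $|\pi(g)\phi(x)-\pi(g)\phi(y)|$ simultaneously for \emph{every} $g\in G$. It is exactly the uniform (over $g$) form of Lyapunov stability that supplies this, the passage from $g$ to $g^{-1}$ being harmless in a group. Without the Lyapunov hypothesis one would only recover continuity of each individual $\pi(g)\phi$, which is far too weak to give precompactness of the whole orbit.
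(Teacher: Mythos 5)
Your proof is correct and follows exactly the paper's approach: the paper also deduces equicontinuity of $O(\phi)$ from Lyapunov stability (stating it as ``easy to see'') and then applies the Arzel\`a--Ascoli theorem together with boundedness of the orbit. Your version simply spells out the details the paper omits, in particular the quantifier bookkeeping showing that the Lyapunov $\delta$ works uniformly over all $g \in G$.
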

\begin{proof} It is easy to see that Lyapunov stability implies that for any $\phi\in C(K)$, $O(\phi)$ is an equicontinuous family of functions.
Since $O(\phi)$ is bounded, it is precompact by Arzela –- Ascoli theorem.
\end{proof}

\begin{lemma}\label{ForCor} Let an action of $G$ on $K$ be Lyapunov stable and $\pi$ be as above. Then $$C(K) = C(K)_{\pi} + Ann (C(K)^*_{\bar\pi}).$$
\end{lemma}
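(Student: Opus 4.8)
The plan is to reduce everything to Lemma \ref{2}(i): it suffices to show that for every $\phi \in C(K)$ the closed convex hull $E(\phi)$ of the orbit $O(\phi)$ contains a $\pi$-invariant function. Once this is established, Lemma \ref{2}(i) immediately yields $C(K) = C(K)_{\pi} + Ann(C(K)^*_{\bar\pi})$, which is exactly the assertion. So the whole proof is really a fixed-point argument run inside $E(\phi)$.

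First I would record that each $\pi(g)$ is an isometry of $C(K)$: since $\pi(g)\phi(x) = \phi(g^{-1}x)$ is precomposition with a homeomorphism of $K$, it preserves the sup norm. Next I would fix $\phi$ and assemble the relevant compact convex set. By Lemma \ref{compactorbits} the orbit $O(\phi)$ is precompact, so $\overline{O(\phi)}$ is norm-compact; by Mazur's theorem the closed convex hull of a norm-compact set in a Banach space is again norm-compact, so $E(\phi)$ is a norm-compact convex set. Because $\pi$ is a group homomorphism, each $\pi(g)$ permutes $O(\phi)$ and, being a continuous affine bijection, carries $E(\phi)$ onto itself; thus $E(\phi)$ is a $\pi(G)$-invariant compact convex subset of $C(K)$, and $\{\pi(g)|_{E(\phi)}\}$ is a group of affine homeomorphisms of $E(\phi)$.

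The common fixed point is then produced by Kakutani's fixed point theorem, exactly as in the proof of Theorem \ref{compactgroup}: the restricted maps $\pi(g)|_{E(\phi)}$ are affine self-homeomorphisms of the compact convex set $E(\phi)$, so they have a common fixed point $\phi_0 \in E(\phi)$. This $\phi_0$ is a $\pi$-invariant vector lying in $E(\phi)$, which is precisely the hypothesis of Lemma \ref{2}(i). Applying that lemma finishes the proof.

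The main point to handle carefully is the applicability of the fixed point theorem for a possibly non-abelian $G$. The naive commuting version (Markov--Kakutani) is not available, so I would lean on the fact that the $\pi(g)$ are isometries: a group of isometries restricted to a norm-compact convex set is an equicontinuous family of affine homeomorphisms, which is exactly the setting in which Kakutani's theorem applies. (Alternatively, since $E(\phi)$ is norm-compact it is weakly compact and the isometry group is distal, so Ryll-Nardzewski as in Theorem \ref{refl} would serve equally well.) The other step worth stating explicitly, rather than assuming silently, is the compactness of $E(\phi)$: it is Lemma \ref{compactorbits} combined with Mazur's theorem that upgrades precompactness of $O(\phi)$ to compactness of its closed convex hull, and this is what makes the norm-topology fixed point theorem available in the first place.
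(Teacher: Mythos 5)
Your proposal is correct and follows essentially the same route as the paper: Lemma \ref{compactorbits} gives compactness of the closed convex hull $E(\phi)$, Kakutani's fixed point theorem (applicable since the $\pi(g)$ are isometries, hence equicontinuous on $E(\phi)$) yields an invariant vector there, and Lemma \ref{2}(i) finishes the argument. The extra details you supply (Mazur's theorem for compactness of the convex hull, and the equicontinuity hypothesis for Kakutani) are exactly the points the paper leaves implicit.
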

\begin{proof} It follows from Lemma \ref{compactorbits}, that for any $\phi \in C(K)$, the closed convex hull $E(\phi)$ of the orbit $O(\phi)$ is compact. By  Kakutani's fixed point theorem $E(\phi)$ contains a common fixed point for all $\pi(g)$, that is an invariant vector. By Lemma \ref{2}, \begin{equation}\label{eq1} C(K) = C(K)_{\pi} + Ann (C(K)^*_{\bar\pi}).\end{equation}
\end{proof}

Now we obtain a short proof of an assertion in \cite{Manuilov} on the uniqueness of invariant measures.

\begin{corollary}(\cite{Manuilov}, Lemma 6.1) Suppose a group $G$ acts on a compact metric space $K$ in such a way that the orbit of an element $a\in K$ is dense in $K$. If the action is Lyapunov stable, then $K$ carries not more than one invariant regular measure.
\end{corollary}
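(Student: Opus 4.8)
The plan is to recognize that an invariant regular measure is nothing but a fixed point of the adjoint representation $\bar\pi$ on $C(K)^*$, and then to feed this observation into the decomposition supplied by Lemma \ref{ForCor}. First I would make the translation precise: for a regular measure $\mu\in C(K)^*$ one has $(\bar\pi(g)\mu)(\phi) = \mu(\pi(g^{-1})\phi) = \int \phi(gx)\,d\mu(x)$, so $\mu$ is $\bar\pi$-fixed if and only if $\int \phi(gx)\,d\mu = \int \phi\,d\mu$ for every $\phi\in C(K)$ and every $g\in G$, which is exactly the statement that $\mu$ is $G$-invariant. Hence the invariant regular measures on $K$ are precisely the elements of $C(K)^*_{\bar\pi}$.

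Next I would identify $C(K)_{\pi}$. An invariant function satisfies $\phi(gx)=\phi(x)$ for all $g$, so $\phi$ is constant on each orbit $\{ga : g\in G\}$; since this orbit is dense and $\phi$ is continuous, $\phi$ is constant on its closure, which is all of $K$. Thus $C(K)_{\pi} = \mathbb{C}\mathbf{1}$, the line of constant functions. Now Lemma \ref{ForCor} applies (the action being Lyapunov stable) and gives
\[
C(K) = C(K)_{\pi} + Ann(C(K)^*_{\bar\pi}) = \mathbb{C}\mathbf{1} + Ann(C(K)^*_{\bar\pi}),
\]
so every $\phi\in C(K)$ admits some decomposition $\phi = c\,\mathbf{1} + \psi$ with $\psi\in Ann(C(K)^*_{\bar\pi})$, i.e. $\psi$ annihilated by every invariant measure.

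To finish, let $\mu_1,\mu_2$ be two invariant regular probability measures and fix, for each $\phi$, one decomposition $\phi = c\,\mathbf{1}+\psi$ as above. Applying $\mu_i$ to this single decomposition, the term $\psi$ contributes $\mu_i(\psi)=0$ because $\psi\in Ann(C(K)^*_{\bar\pi})$, while $\mu_i(\mathbf{1})=1$; hence $\mu_1(\phi)=c=\mu_2(\phi)$ for every $\phi$, and therefore $\mu_1=\mu_2$. The whole argument is nearly immediate once Lemma \ref{ForCor} is available; the only points needing care are the identification of invariant measures with $\bar\pi$-fixed functionals and the observation that the scalar $c$ is read off from $\phi$ alone, so it is common to both measures and the non-uniqueness of the (merely additive, not direct) decomposition is harmless. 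I would also remark that without normalizing the total mass the same computation yields $\mu_1(\phi)/\mu_1(\mathbf{1}) = \mu_2(\phi)/\mu_2(\mathbf{1})$, so any two invariant regular measures are proportional, which is the precise meaning of ``not more than one''.
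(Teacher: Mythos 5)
Your proposal is correct and follows essentially the same route as the paper: both rest on Lemma \ref{ForCor} together with the observation that a dense orbit forces $C(K)_{\pi}=\mathbb{C}$, and on identifying invariant regular measures with elements of $C(K)^*_{\bar\pi}$ via the Riesz theorem. The only cosmetic difference is in the final bookkeeping --- the paper deduces $\mathrm{codim}\, Ann(C(K)^*_{\bar\pi}) \le 1$ and hence $\dim C(K)^*_{\bar\pi}\le 1$, while you evaluate two invariant measures directly on the decomposition $\phi = c\,\mathbf{1}+\psi$; your closing remark that the conclusion means proportionality (uniqueness up to normalization) is a fair reading of what the dimension bound actually gives.
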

\begin{proof} By Lemma \ref{ForCor},  $C(K) = C(K)_{\pi} + Ann (C(K)^*_{\bar\pi}).$ Since the orbit of $a\in K$ is dense in $K$, the only invariant functions are constants, so $C(K)_{\pi} ~=~ \mathbb C$.  Hence $codim Ann (C(K)^*_{\bar\pi}) \le 1$ and  $\dim C(K)^*_{\bar\pi} \le 1$.
The latter exactly means that there is not more than one invariant regular measure on $K$, because regular measures are in one-to one correspondence with points of $C(K)^*$ by Riesz theorem.
\end{proof}

\begin{lemma}\label{equicontinuous} Let $\pi$ be a representation  of a group $G$ on a Banach space $X$ such that all orbits are precompact. Let $E\subset X^*$ be a  bounded $\bar\pi(G)$-invariant subset. Then the group of maps $\{\bar\pi(g)|_{E}\}_{g\in G}$ is equicontinuous with respect to the relative $\ast$-weak topology on $E$.
\end{lemma}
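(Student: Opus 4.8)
The plan is to reduce the statement to its definition in the weak-$*$ uniformity and then exploit the norm-precompactness of the orbits together with the boundedness of $E$. Recall that a basic entourage of the relative weak-$*$ uniformity on $E$ has the form $\{(f,f')\in E\times E : |f(x_i)-f'(x_i)|<\delta,\ i=1,\dots,n\}$ for a finite tuple $x_1,\dots,x_n\in X$ and $\delta>0$. Thus equicontinuity of $\{\bar\pi(g)|_{E}\}_{g\in G}$ amounts to the following: given $x_1,\dots,x_n\in X$ and $\epsilon>0$, one must produce a finite tuple $y_1,\dots,y_m\in X$ and a $\delta>0$ such that whenever $f,f'\in E$ satisfy $|f(y_j)-f'(y_j)|<\delta$ for all $j$, then $|(\bar\pi(g)f)(x_i)-(\bar\pi(g)f')(x_i)|<\epsilon$ for all $i$ and \emph{all} $g\in G$ simultaneously.

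The key algebraic step is the identity $(\bar\pi(g)f)(x_i)-(\bar\pi(g)f')(x_i)=(f-f')(\pi(g^{-1})x_i)$, which follows directly from the definition of the adjoint representation. As $g$ ranges over $G$ the vector $\pi(g^{-1})x_i$ ranges over the orbit $O(x_i)$, whose closure $K_i$ is compact by hypothesis. Hence the problem becomes: control $f-f'$ uniformly on the compact set $K_i$, knowing only that $f-f'$ is small at finitely many points. This is where boundedness of $E$ enters. Put $M=\sup_{f\in E}\|f\|$, so $\|f-f'\|\le 2M$ on $E$, and for each $i$ choose a finite $\eta$-net $\{z_{i,1},\dots,z_{i,k_i}\}$ of $K_i$ with $\eta=\epsilon/(4M)$. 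Taking the $y_j$ to be all these net points and $\delta=\epsilon/2$, one estimates, for an arbitrary $z=\pi(g^{-1})x_i\in K_i$ and a nearest net point $z_{i,l}$,
\[
|(f-f')(z)|\le |(f-f')(z_{i,l})|+\|f-f'\|\,\|z-z_{i,l}\|<\tfrac{\epsilon}{2}+2M\eta=\epsilon,
\]
uniformly in $g$, which is exactly the required estimate.

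The argument is essentially an Arzel\`a--Ascoli duality: norm-precompactness of the orbit furnishes a finite net, while boundedness of $E$ lets weak-$*$ closeness at the net points be upgraded to uniform closeness over the whole orbit. I expect the only real care to be in the bookkeeping---making sure the chosen $y_j$ and $\delta$ are genuinely independent of $g$, which is automatic here because the net depends only on the compact set $K_i$ and not on $g$. No appeal to weak-$*$ continuity of the individual maps $\bar\pi(g)$ is even needed; the uniform estimate above already subsumes it.
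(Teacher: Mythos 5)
Your proof is correct and follows essentially the same route as the paper's: reduce equicontinuity to the basic weak-$*$ entourages, use the identity $(\bar\pi(g)f)(x_i)=f(\pi(g^{-1})x_i)$, take a finite net in the (norm-)precompact orbit closures, and use the bound $\|f-f'\|\le 2M$ to pass from smallness at the net points to smallness on the whole orbit. The only differences are cosmetic: you use an $\epsilon/(4M)$-net with $\delta=\epsilon/2$ and nets in each $\overline{O(x_i)}$ separately, while the paper uses an $\epsilon/(6M)$-net in $\bigcup_i\overline{O(x_i)}$ with $\delta=\epsilon/3$.
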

\begin{proof} Let $\epsilon >0$ and $x_1, \ldots, x_N\in X$. We need to find $\delta>0$ and $y_1, \ldots, y_m\in X$ such that $$|(\bar\pi(g)(h_1-h_2))(x_i)|<\epsilon, \; g\in G, \; i=1, \ldots, N$$ whenever $h_1, h_2\in E$ and $|h_1(y_j) - h_2(y_j)| < \delta$, $j=1, \ldots, m$.
Let $M = \sup_{h\in E} \|h\|$. As $\{y_j\}$ we take a $\frac{\epsilon}{6M}$-net in $\bigcup_{i=1}^N \overline{O(x_i)}$ which exists since all orbits are precompact. Let $\delta = \epsilon/3$. Then for any $h_1, h_2\in E$ such that $|h_1(y_j) - h_2(y_j)| < \delta$, $j=1, \ldots, m$, and for any $g\in G$ we have
$$|(\bar\pi(g)(h_1-h_2))(x_i)| = |(h_1-h_2)(\pi(g^{-1})x_i)| \le |(h_1-h_2)(\pi(g^{-1})x_i - y_k)| + |(h_1-h_2)(y)| $$ (where $k=k(i, g)$ is chosen in such  a way that $\|\pi(g^{-1})x_i - y_k\| \le \frac{\epsilon}{6M}$) $$\le \|h_1-h_2\|\frac{\epsilon}{6M} + \epsilon/3 < \epsilon/3 + \epsilon/3 \le \epsilon.$$
\end{proof}

\begin{theorem}\label{LyapunovActions} Let an action of $G$ on $K$ be Lyapunov stable and $\pi$ be as above. Then $$C(K) = C(K)_{\pi} \oplus Ann (C(K)^*_{\bar\pi}).$$ The corresponding projection onto $C(K)_{\pi}$ is a conditional expectation.
\end{theorem}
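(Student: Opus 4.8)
The plan is to establish the direct sum decomposition first, and then separately upgrade the resulting projection to a conditional expectation. For the decomposition, I would combine the machinery already assembled in this section. Lemma \ref{ForCor} gives the sum $C(K) = C(K)_{\pi} + Ann (C(K)^*_{\bar\pi})$ directly, so the real work is to verify that the intersection $C(K)_{\pi}\cap Ann (C(K)^*_{\bar\pi})$ is trivial. By Lemma \ref{1} it suffices to show that $C(K)^*_{\bar\pi} + Ann (C(K)^{**}_{\overline{\overline\pi}})$ is $\ast$-weakly dense in $C(K)^*$, and the natural route to this is to produce, for every functional $f\in C(K)^*$, an invariant functional in the $\ast$-weakly closed convex hull $E_w(f)$ of the orbit $O(f)$; that would give $C(K)^* = C(K)^*_{\bar\pi} + Ann(C(K)_{\pi})$ by Lemma \ref{L4}(ii), which is more than enough for the density hypothesis of Lemma \ref{1}.

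To find such an invariant functional I would apply a fixed point theorem to the dual action $\bar\pi$ on the set $E_w(f)$. This set is convex and $\ast$-weakly compact (being a $\ast$-weakly closed, bounded subset of $C(K)^*$, using that $\pi$ is isometric so all orbits are bounded), and it is $\bar\pi(G)$-invariant. The obstacle here is that the maps $\bar\pi(g)$ need not be norm-continuous or jointly nice enough for Kakutani's theorem on the nose; this is precisely what Lemma \ref{equicontinuous} is designed to repair. Since the action is Lyapunov stable, Lemma \ref{compactorbits} tells us all orbits in $C(K)$ are precompact, so Lemma \ref{equicontinuous} applies and yields that $\{\bar\pi(g)|_{E_w(f)}\}_{g\in G}$ is an equicontinuous family of $\ast$-weakly continuous affine maps on the $\ast$-weakly compact convex set $E_w(f)$. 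An equicontinuous group of affine homeomorphisms of a compact convex set has a common fixed point (this is the content of the relevant fixed point theorem for equicontinuous actions), giving the desired invariant functional $f_0\in E_w(f)$. Combined with Lemma \ref{ForCor} and Lemma \ref{1}, this establishes the direct sum.

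It remains to show that the projection $P$ onto $C(K)_{\pi}$ is a conditional expectation. Here $C(K)_{\pi}$ is exactly the subalgebra of functions constant on orbit closures, i.e. the invariant functions, and is itself a unital $C^*$-subalgebra (a sub-$C(\cdot)$-algebra) of $C(K)$ containing the constants. As in Theorem \ref{refl}, the proof of Lemma \ref{2} shows that $P\phi$ lies in the closed convex hull of the orbit $O(\phi)$; since $\pi$ is isometric this forces $\|P\|=1$ and $P$ to be unital and positive. A norm-one projection from a $C^*$-algebra onto a $C^*$-subalgebra containing the unit is automatically a conditional expectation by Tomiyama's theorem, so once positivity and the norm-one property are in hand the conditional expectation property follows immediately. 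The main obstacle I anticipate is the fixed-point step for the dual action: one must be careful that the correct fixed point theorem (for equicontinuous, rather than merely continuous, affine actions) is invoked, and that Lemma \ref{equicontinuous} is applied to an invariant compact convex set, which is exactly why $E_w(f)$ rather than a single orbit is the right object to work with.
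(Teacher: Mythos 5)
Your proposal follows essentially the same route as the paper: Lemma \ref{ForCor} for the sum, Lemma \ref{compactorbits} plus Lemma \ref{equicontinuous} plus Kakutani's fixed point theorem for equicontinuous affine actions to produce invariant functionals in the $\ast$-weakly compact convex hulls of dual orbits, and Tomiyama's theorem to upgrade the norm-one projection to a conditional expectation. All of these steps are sound, and your identification of $E_w(f)$ as the right object for the fixed-point argument matches the paper exactly.

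The one inaccuracy is the claim that $C(K)^* = C(K)^*_{\bar\pi} + Ann(C(K)_{\pi})$ is ``more than enough for the density hypothesis of Lemma \ref{1}.'' The inclusion goes the wrong way: since $C(K)_{\pi}$ embeds into $C(K)^{**}_{\overline{\overline\pi}}$ under the canonical embedding, one has $Ann(C(K)^{**}_{\overline{\overline\pi}}) \subseteq Ann(C(K)_{\pi})$, so surjectivity of the sum with the \emph{larger} preannihilator does not imply $\ast$-weak density of the sum with the smaller one. (Indeed, finding fixed points only in the $\ast$-weakly closed hull $E_w(f)$ cannot verify Lemma \ref{1}'s hypothesis, because invariant elements of $C(K)^{**}$ need not be $\ast$-weakly continuous and hence need not be constant on $E_w(f)$.) The gap is harmless and repaired in one line: if $\phi \in C(K)_{\pi}\cap Ann(C(K)^*_{\bar\pi})$, then $\phi$ annihilates both $C(K)^*_{\bar\pi}$ and $Ann(C(K)_{\pi})$, hence all of $C(K)^*$ by your decomposition, so $\phi = 0$; this is exactly the argument of Theorem \ref{amen}(ii), or equivalently Lemma \ref{1} reproved verbatim with $Ann(C(K)_{\pi})$ in place of $Ann(C(K)^{**}_{\overline{\overline\pi}})$. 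It is worth noting that the paper's own proof carries the mirror-image imprecision: its equation for $C(K)^*$ is stated with $Ann(C(K)^{**}_{\overline{\overline\pi}})$, which via Lemma \ref{2}(i) would require the fixed point to lie in the \emph{norm}-closed convex hull of the dual orbit, whereas the fixed-point argument only delivers it in the $\ast$-weakly closed hull (and for, say, an irrational rotation the norm-closed hull of the orbit of a point mass contains no invariant measure at all); so the paper's proof needs the same one-line patch yours does.
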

\begin{proof} Let $f\in C(K)^*$ and $E(f)$ be the closed convex hull of $O(f) = \{\bar\pi(g)f\}_{g\in G}$. Then $E(f)$ is a convex $\ast$-weakly compact $\bar\pi(G)$-invariant subset.  By Lemma \ref{equicontinuous} and Kakutani's fixed point theorem, in $E(f)$ there is a $\bar\pi(G)$-invariant vector.
By Lemma \ref{2} \begin{equation}\label{eq2}C(K)^* = C(K)^*_{\bar\pi} + Ann (C(K)^{**}_{\overline {\overline \pi}}). \end{equation} The decomposition $C(K) = C(K)_{\pi} \oplus Ann (C(K)^*_{\bar\pi})$ follows now from Lemma \ref{1}, Lemma \ref{ForCor} and (\ref{eq2}). The same argument as in Theorem \ref{refl} shows that the projection onto $X_{\pi}$ has norm 1. Since $C(K)$ and $C(K)_{\pi}$ are $C^*$-algebras, by \cite{Tomiyama} it is a conditional expectation.
\end{proof}

\section{Lower semi-continuous actions}

Now we will show that for more general actions than Lyapunov stable, namely for lower semi-continuous actions, the subspace $C(K)_{\pi} $ is complemented.

\medskip

Let $X$ be a compact metric space.
Let $M$ be a partition of $X$
into closed subsets. For $x\in  X$ let $M(x)$ denote the member of $M$
which contains $x$. Corresponding to the standard definitions  $M$ is called {\it lower semi-continuous} if
$\{x ~\in X: ~M(x) \bigcap U \neq \emptyset \}$
is an open set in $X$ for every open set $U$ in $X$.


\medskip

If $P \subseteq C(X)$  is a subspace, then the
{\it $P$-partition} of $X$ is the partition associated with the following equivalence
relation R. A couple $(x, y)\in R$ if and only if $p(x) = p(y)$
for every $p \in P$. Now let
 \begin{multline} K(P) = \bigcup \{K \subseteq X: K \text{\;is a
member of the P-partition of X},  \\ \text{\;and K contains more than one point
of}\; X\}\end{multline}
According to \cite{Wulbert},  $P$ has a {\it lower semi-continuous quotient} if
the restriction of the $P$-partition to $\overline{K(P)}$ is lower semi-continuous.

\begin{definition} We will say that an action is lower semi-continuous if the corresponding $C(K)_{\pi} $-partition is lower semi-continuous.
\end{definition}

\medskip

We don't know any example of a not lower semi-continuous action. An easy example of a lower semi-continuous action is the action from Example \ref{example}. Other examples are Lyapunov stable actions as we show below.

\begin{lemma}\label{open} For any continuous  action and for any open $U\subseteq K$, the set $\{x\;|\;  O(x) \cap U \neq \emptyset\}$ is open.
\end{lemma}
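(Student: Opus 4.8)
The plan is to rewrite the set in question as a union of open sets indexed by the group and then invoke continuity of each individual group transformation. First I would observe that, by the definition of the orbit $O(x) = \{gx : g\in G\}$, the condition $O(x)\cap U \neq \emptyset$ holds precisely when there is some $g\in G$ with $gx\in U$. Consequently
$$\{x\in K : O(x)\cap U \neq \emptyset\} = \bigcup_{g\in G} \{x\in K : gx\in U\}.$$

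Next I would note that for each fixed $g\in G$ the map $\phi_g\colon K\to K$ given by $\phi_g(x)=gx$ is continuous; this is an immediate consequence of the continuity of the action, and in fact $\phi_g$ is a homeomorphism with inverse $\phi_{g^{-1}}$. Hence each set $\{x\in K : gx\in U\} = \phi_g^{-1}(U)$ is the preimage of the open set $U$ under a continuous map, and is therefore open.

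Finally, the displayed set is a union of open sets and so is itself open, which is exactly the claim. I do not expect any genuine obstacle: the argument is a one-line reduction once the orbit condition is unwound. The only point worth stating explicitly is the passage from the phrase \emph{continuous action} to continuity of each transformation $x\mapsto gx$, which is automatic since joint continuity of the map $G\times K\to K$ restricts, for each fixed $g$, to continuity in the second variable.
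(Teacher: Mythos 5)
Your proof is correct and is essentially the same argument as the paper's: both reduce the claim to the continuity of each individual map $x\mapsto gx$. The paper merely writes this out in metric $\varepsilon$--$\delta$ form (and states it for the equivalent set $\{x \,|\, \overline{O(x)}\cap U\neq\emptyset\}$, which coincides with yours because $U$ is open), whereas you phrase it as the union $\bigcup_{g\in G}\phi_g^{-1}(U)$ of open preimages.
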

\begin{proof} Let $E = \{x\;|\;  \overline{O(x)} \cap U \neq \emptyset\}$. Since $U$ is open,  $x\in E$ implies that there is $g\in G$ and $\epsilon >0$ such that the ball $B_{\epsilon}(gx)$ is inside $U$. There is $\delta$ such that $d(gx, gy) \le \epsilon$ whenever $d(x, y)<\delta$. Thus for any $y\in B_{\delta}(x)$ we have $gy \in B_{\epsilon}(gx))\subset U$. Hence $y\in E$. Thus $E$ is open.
\end{proof}

\medskip

We are going to use the following result from \cite{Manuilov}.
\begin{theorem}\label{FMT}(\cite{Manuilov}, Lemma 3.1) For a Lyapunov stable action, any two orbits are either separated from each other, or have the same closure. The quotient space of closures of orbits is Hausdorff.
\end{theorem}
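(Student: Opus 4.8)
The plan is to replace the given metric by a $G$-invariant one and then argue entirely with isometries. First I would set $\rho(x,y) = \sup_{g \in G} d(gx, gy)$. Since $K$ is compact, $d$ is bounded, so $\rho$ is finite, and its triangle inequality follows termwise from that for $d$; thus $\rho$ is a metric. It is $G$-invariant: for $h \in G$, $\rho(hx, hy) = \sup_g d(ghx, ghy) = \sup_{g'} d(g'x, g'y) = \rho(x,y)$, so every $g \in G$ acts as a $\rho$-isometry. Crucially, Lyapunov stability gives that $\rho$ is topologically equivalent to $d$: one always has $d(x,y) \le \rho(x,y)$ (take $g = e$), while for each $\epsilon$ the stability constant $\delta(\epsilon)$ yields $\rho(x,y) \le \epsilon$ whenever $d(x,y) < \delta(\epsilon)$. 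Hence $\rho$ and $d$ define the same open sets, closures, and compact sets, and I may work with $\rho$ throughout.

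For the dichotomy, I would show that two orbits are separated unless their closures coincide. Suppose $O(x)$ and $O(y)$ are not separated, i.e. $\inf_{g,h} \rho(gx, hy) = 0$; by invariance this equals $\inf_{g} \rho(x, gy)$, so there are $g_n$ with $g_n y \to x$, giving $x \in \overline{O(y)}$. The key point is that isometry lets me reverse the limit: from $\rho(g_n y, x) \to 0$ and invariance, $\rho(y, g_n^{-1} x) = \rho(g_n y, x) \to 0$, so $g_n^{-1} x \to y$ and $y \in \overline{O(x)}$. Now $x \in \overline{O(y)}$ forces $O(x) \subseteq \overline{O(y)}$ (apply each isometry $h$ to $g_n y \to x$ to get $h g_n y \to hx$ with $h g_n y \in O(y)$), hence $\overline{O(x)} \subseteq \overline{O(y)}$; symmetrically $\overline{O(y)} \subseteq \overline{O(x)}$, so the closures agree. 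This proves the first assertion.

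For the Hausdorff quotient, let $Y$ be the set of orbit closures and define $\bar\rho(\overline{O(x)}, \overline{O(y)}) = \inf_{g \in G} \rho(x, gy)$. I would check it is a metric on $Y$. Symmetry follows from invariance, since $\rho(x, gy) = \rho(g^{-1}x, y)$. The triangle inequality follows because, after fixing a near-optimal $h$ for the pair $(x,y)$, invariance gives $\inf_g \rho(hy, gz) = \inf_g \rho(y, h^{-1}g z) = \bar\rho(\overline{O(y)}, \overline{O(z)})$, so the group element is absorbed and the estimate closes up. Positivity is exactly the dichotomy: $\bar\rho = 0$ forces, via compactness and extraction of convergent subsequences, the closures to intersect, and hence by the first part to coincide. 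Finally the quotient map $q : K \to Y$ satisfies $\bar\rho(q(x_n), q(x)) \le \rho(x_n, x)$, so $q$ is continuous into $(Y, \bar\rho)$; thus the quotient topology is finer than the $\bar\rho$-topology, and since the latter is Hausdorff, so is the quotient.

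The main obstacle is the triangle inequality for $\bar\rho$: inf-distances between sets are not metrics in general, and the argument succeeds only because $\rho$ is genuinely $G$-invariant, which is what lets me rewrite $\inf_g \rho(hy, gz)$ as an orbit distance independent of $h$. The secondary delicate point is deducing $y \in \overline{O(x)}$ from $x \in \overline{O(y)}$; this is false for arbitrary continuous actions and here relies on invertibility of the $g_n$ together with invariance of $\rho$. Passing from the $\bar\rho$-topology on $Y$ to the quotient topology also deserves care, but the continuity estimate for $q$ handles it cleanly.
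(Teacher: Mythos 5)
The paper offers no proof of this statement: it is imported verbatim from \cite{Manuilov} (Lemma 3.1) and used as a black box, so there is no internal argument to compare yours against --- what you have written is, in effect, the proof the paper omits. Your argument is correct. The remetrization $\rho(x,y)=\sup_{g\in G}d(gx,gy)$ is legitimate (finiteness from compactness of $K$, the triangle inequality termwise, $G$-invariance by reindexing the supremum), and Lyapunov stability is exactly what makes $\rho$ topologically equivalent to $d$; after that, the two genuinely delicate points are handled by the isometry property, namely reversing $g_n y\to x$ into $g_n^{-1}x\to y$ (which indeed fails for general continuous actions --- think of north-south dynamics on the circle) and the triangle inequality for $\tilde\rho(x,y)=\inf_{g}\rho(x,gy)$. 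Since $\tilde\rho$ is a $G$-invariant pseudometric on $K$ whose zero-relation is, by your dichotomy, precisely ``equal orbit closures'', it descends to a metric on the quotient; this also settles the one point you leave tacit, the independence of $\bar\rho$ of the chosen representatives (immediate from the triangle inequality plus its vanishing on equivalence classes). The final step --- the quotient map is $1$-Lipschitz into $(Y,\bar\rho)$, so the quotient topology refines a metric, hence Hausdorff, topology --- is also correct, as is your identification of ``not separated'' with zero infimal distance, which by compactness coincides with the closures intersecting. Compared with the direct $\varepsilon$--$\delta$ chase one would extract from \cite{Manuilov}, your route buys a cleaner structural statement: a Lyapunov stable action is precisely an action by isometries for an equivalent metric, and both assertions of the theorem then become standard facts about isometric actions on compact metric spaces.
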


\medskip

The following corollary shows that for Lyapunov stable actions the partition into closed orbits and the $C(K)_{\pi}$-partition are the same.

\begin{corollary}\label{EqualPartitions} Suppose we have a Lyapunov stable action and let $R$ be the equivalence relation  defining $C(K)_{\pi}$-decomposition of $K$.
Then $(x, y)\in R$ if and only if $ \overline{O(x)} =  \overline{O(y)}$.
\end{corollary}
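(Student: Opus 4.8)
The plan is to prove the equivalence $(x,y)\in R \iff \overline{O(x)} = \overline{O(y)}$ in two directions, using the structural result of Theorem \ref{FMT}. Recall that $(x,y)\in R$ means $\phi(x)=\phi(y)$ for every $\phi\in C(K)_\pi$, i.e.\ for every continuous $G$-invariant function.

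\textbf{The easy direction} is $\overline{O(x)}=\overline{O(y)} \Rightarrow (x,y)\in R$. First I would observe that any $\phi\in C(K)_\pi$ is constant on each orbit: $\phi(gx)=(\pi(g^{-1})\phi)(x)=\phi(x)$ by invariance. By continuity $\phi$ is then constant on $\overline{O(x)}$. Hence if $\overline{O(x)}=\overline{O(y)}$, then $\phi(x)$ and $\phi(y)$ are both equal to the common constant value of $\phi$ on that closed set, so $\phi(x)=\phi(y)$ for all invariant $\phi$, giving $(x,y)\in R$. Note this direction needs no Lyapunov stability.

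\textbf{The converse} $(x,y)\in R \Rightarrow \overline{O(x)}=\overline{O(y)}$ is where Lyapunov stability enters, and this is the main obstacle. The strategy is contrapositive: assume $\overline{O(x)}\neq\overline{O(y)}$ and produce an invariant continuous function separating $x$ and $y$. By Theorem \ref{FMT}, two orbits are either separated or share the same closure; since the closures differ, the orbits are separated, and the quotient space $Q$ of orbit-closures is Hausdorff. Let $q:K\to Q$ be the quotient map. The key point is that any $\psi\in C(Q)$ pulls back to $\psi\circ q\in C(K)$, and $\psi\circ q$ is automatically $G$-invariant since $q$ is constant on orbit-closures; conversely every invariant continuous function factors through $q$. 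Thus $C(K)_\pi \cong C(Q)$ via pullback. Since $q(x)\neq q(y)$ in the Hausdorff space $Q$, Urysohn's lemma (the quotient $Q$ being compact Hausdorff) yields $\psi\in C(Q)$ with $\psi(q(x))\neq\psi(q(y))$, hence an invariant $\phi=\psi\circ q\in C(K)_\pi$ with $\phi(x)\neq\phi(y)$, so $(x,y)\notin R$.

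The delicate part requiring care is the claim that the quotient map $q$ is continuous so that pullbacks of continuous functions on $Q$ are genuinely continuous on $K$, and that $Q$ is compact; the former follows from the definition of the quotient topology and the latter from compactness of $K$ together with continuity of $q$. I would verify that the Hausdorff property from Theorem \ref{FMT} is exactly what makes the separation by a continuous function on $Q$ available via Urysohn. Combining both directions establishes the stated equivalence, which shows the partition into orbit-closures coincides with the $C(K)_\pi$-partition.
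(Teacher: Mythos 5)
Your proposal is correct and follows essentially the same route as the paper: the easy direction from invariant functions being constant on orbit closures, and the converse by contrapositive, using Theorem \ref{FMT} (Hausdorffness of the orbit-closure quotient) together with Urysohn's lemma to pull back a separating continuous function to an element of $C(K)_\pi$. Your extra care about continuity of the quotient map and compactness of the quotient fills in details the paper leaves implicit, but the argument is the same.
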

\begin{proof} Since functions in $C(K)_{\pi}$ are those which are constant on orbits,  the "if" part follows.

\noindent To prove the "only if" part, assume that $ \overline{O(x)} \neq  \overline{O(y)}$. Then by Urysohn's lemma and Theorem \ref{FMT}, there is a continuous function $\psi$ on $K/s$ such that $\psi(\overline{O(x)}) \neq \psi(\overline{O(y)}).$  Define $\phi\in C(K)$ by $\phi(x) = \psi(\overline{O(x)}).$ Then $\phi\in C(K)_{\pi}$ and $\phi(x) \neq \phi(y)$, hence $(x, y)\notin R$.
\end{proof}

\begin{theorem} Lyapunov stable actions are lower semi-continuous.
\end{theorem}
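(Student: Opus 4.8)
The plan is to reduce the statement to the two results already in hand: the identification of the $C(K)_{\pi}$-partition in Corollary \ref{EqualPartitions} and the openness result in Lemma \ref{open}. Write $R$ for the equivalence relation defining the $C(K)_{\pi}$-partition and, for $x\in K$, let $M(x)$ denote its class. My first step is to pin $M(x)$ between the orbit and its closure: I claim that $O(x)\subseteq M(x)\subseteq \overline{O(x)}$. The left inclusion holds because every $y\in O(x)$ satisfies $\overline{O(y)}=\overline{O(x)}$, so $(x,y)\in R$ by Corollary \ref{EqualPartitions}; the right inclusion holds because $y\in M(x)$ gives $\overline{O(y)}=\overline{O(x)}$, again by Corollary \ref{EqualPartitions}, and $y\in\overline{O(y)}$. (These inclusions in fact combine with Theorem \ref{FMT} to yield $M(x)=\overline{O(x)}$, but only the sandwich is needed below.)

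With this in hand I would fix an arbitrary open set $U\subseteq K$ and show that $\{x : M(x)\cap U\neq\emptyset\}$ is open. The key observation is that, because $U$ is open and $O(x)\subseteq M(x)\subseteq\overline{O(x)}$, the conditions $M(x)\cap U\neq\emptyset$ and $O(x)\cap U\neq\emptyset$ are equivalent: if $M(x)\cap U\neq\emptyset$ then $\overline{O(x)}\cap U\neq\emptyset$, and a point of $\overline{O(x)}$ lying in the open set $U$ forces a genuine orbit point into $U$; conversely $O(x)\cap U\neq\emptyset$ gives $M(x)\cap U\neq\emptyset$ since $O(x)\subseteq M(x)$. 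Hence
$$\{x : M(x)\cap U\neq\emptyset\}=\{x : O(x)\cap U\neq\emptyset\},$$
and the latter set is open by Lemma \ref{open}. As $U$ was arbitrary, the $C(K)_{\pi}$-partition is lower semi-continuous, which is exactly the claim that the action is lower semi-continuous.

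I do not expect a genuine obstacle: the entire content is carried by Corollary \ref{EqualPartitions} (which rests on Theorem \ref{FMT}) and Lemma \ref{open}. The one place deserving care is the passage from $\overline{O(x)}\cap U\neq\emptyset$ to $O(x)\cap U\neq\emptyset$, where openness of $U$ is essential, together with the mild discrepancy between the statement of Lemma \ref{open} (phrased with $O(x)$) and its proof (phrased with $\overline{O(x)}$); both describe the same set precisely because $U$ is open. Should the intended notion instead be Wulbert's \emph{lower semi-continuous quotient} — lower semi-continuity only of the restriction of the partition to $\overline{K(P)}$ — the same argument applies after intersecting $U$ with $\overline{K(P)}$, since establishing lower semi-continuity of the whole partition is the stronger statement.
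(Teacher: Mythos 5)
Your proof is correct and follows essentially the same route as the paper: both reduce the claim to Lemma \ref{open} by using Corollary \ref{EqualPartitions} to identify the $C(K)_{\pi}$-partition classes with orbit closures, and then exploit openness of $U$ to replace $\overline{O(x)}\cap U\neq\emptyset$ by $O(x)\cap U\neq\emptyset$. The only cosmetic difference is that you package the argument as the sandwich $O(x)\subseteq M(x)\subseteq\overline{O(x)}$, which lets you avoid citing Theorem \ref{FMT} explicitly (it is already absorbed into the Corollary), whereas the paper invokes it directly in its chain of set equalities.
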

\begin{proof} For any open $U\subseteq K$ we need to check that the set $$E = \{x\in K\;|\; R(x, u)=0, \text{for some}\; u\in U\}$$ is open. By Corollary \ref{EqualPartitions} and Theorem \ref{FMT} $$ E = \{x\;|\; \overline{O(x)} = \overline{O(u)}, \text{for some}\; u\in U\} = \{x\;|\; \overline{O(x)} \cap U \neq \emptyset\} = \{x\;|\; O(x) \cap U \neq \emptyset\}$$ which is open by Lemma \ref{open}.
\end{proof}

\medskip

Now we will show that for lower semi-continuous actions, the subspace $C(K)_{\pi} $ is always complemented.

\begin{lemma}\label{quotient} Let  $P \subseteq C(X)$  be a subspace such that $P$-partition  of $X$ is lower semi-continuous.  Then $P$ has lower semi-continuous quotient.
\end{lemma}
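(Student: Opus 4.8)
The plan is to unwind both occurrences of ``lower semi-continuous'' and to transfer the condition from the whole space $X$ to the closed subset $\overline{K(P)}$. Write $M$ for the $P$-partition, so that for $x\in X$ the member $M(x)=\{y\in X: p(y)=p(x)\text{ for all }p\in P\}$ is closed, being an intersection of closed sets. The hypothesis is that for every open $U\subseteq X$ the set $\{x\in X: M(x)\cap U\neq\emptyset\}$ is open in $X$, and the goal is the analogous statement for the restriction of $M$ to $\overline{K(P)}$, where the ambient space now carries the relative topology.

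The first key step is to observe that for every $x\in\overline{K(P)}$ one has $M(x)\subseteq\overline{K(P)}$, so that the member of the restricted partition through $x$ is simply $M(x)$ itself. Indeed, if $M(x)$ contains more than one point, then $M(x)\subseteq K(P)\subseteq\overline{K(P)}$ directly from the definition of $K(P)$; and if $M(x)=\{x\}$ is a singleton, then trivially $M(x)=\{x\}\subseteq\overline{K(P)}$ since $x\in\overline{K(P)}$. In both cases $M(x)\cap\overline{K(P)}=M(x)$.

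With this identification in hand the remainder is formal. A relatively open subset of $\overline{K(P)}$ has the form $V=U\cap\overline{K(P)}$ for some open $U\subseteq X$, and for $x\in\overline{K(P)}$ the first step gives
$$M(x)\cap V = M(x)\cap U\cap\overline{K(P)} = M(x)\cap U.$$
Hence
$$\{x\in\overline{K(P)}: M(x)\cap V\neq\emptyset\} = \overline{K(P)}\cap\{x\in X: M(x)\cap U\neq\emptyset\}.$$
The set on the right is the intersection of $\overline{K(P)}$ with a set that is open in $X$ by hypothesis, so it is relatively open in $\overline{K(P)}$. This is exactly lower semi-continuity of the restricted partition, that is, $P$ has lower semi-continuous quotient.

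I do not expect a serious obstacle here: the only point requiring care is the containment $M(x)\subseteq\overline{K(P)}$ for $x\in\overline{K(P)}$, and in particular the split into the two cases (a nondegenerate member, which already lies in $K(P)$, versus a singleton). Once that containment is secured, the lower semi-continuity condition pulls back to $\overline{K(P)}$ by simply intersecting the witnessing open sets from $X$ with $\overline{K(P)}$, with no further computation.
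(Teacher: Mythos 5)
Your proof is correct, and its core mechanism---pulling the openness hypothesis from $X$ back to $\overline{K(P)}$ by intersecting with that closed set---is the same as in the paper, but the two arguments are organized differently in a way worth spelling out. The paper reduces the lemma to a more general claim, namely that any subpartition $M_0$ of a lower semi-continuous partition $M$ is lower semi-continuous; it handles a relatively open $U \subseteq K_0 = \overline{K(P)}$ by enlarging it to the open subset $U \cup (X \setminus K_0)$ of $X$ and then intersecting $\{x \in X : M(x) \cap (U \cup (X \setminus K_0)) \neq \emptyset\}$ back with $K_0$. You instead stay with the $P$-partition itself and first establish the containment $M(x) \subseteq \overline{K(P)}$ for every $x \in \overline{K(P)}$ (a nondegenerate member lies in $K(P)$ by the very definition of $K(P)$; a singleton member is trivially inside), after which the transfer needs only the plain open set $U$ and a one-line set identity. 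Note that your containment is exactly what the paper's final set equality uses silently: if a member through a point of $K_0$ could meet $X \setminus K_0$ while missing $U$, that equality would fail (the point would lie in the right-hand side but not the left). In the paper's general subpartition setting the containment is still true, but proving it requires invoking lower semi-continuity itself: if $M(x_0)$ met the open set $X \setminus K_0$ for some $x_0$ in the closure of $\bigcup M_0$, lower semi-continuity would force some member of $M_0$ to meet $X \setminus K_0$, which is impossible since members of $M_0$ lie in $K_0$. So your route is more elementary and makes explicit the step on which the whole argument hinges, while the paper's route is formulated in greater generality but leaves that step implicit.
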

\begin{proof}  It suffices to show that a subpartition of a lower semi-continuous partition is lower semi-continuous.
Let a partition $M$ be lower semi-continuous  and $M_0$ be its subpartition. Let $K_0$ be the closure of the union of members of $M_0$.
Suppose that $U$ is open in $K_0$. We need to show that $\{x\in K_0|\; M_0(x)\bigcap U \neq \emptyset\}$ is open in $K_0$. Since $U \bigcup \{K \setminus K_0\}$ is open in $K$ (because its complement is $K_0\setminus U$) and $M$ is lower semi-continuous, the set $$\{x\in K|\; M(x)\bigcap (U\bigcup \{K\setminus K_0\})\neq \emptyset\}$$ is open, whence $$\{x\in K_0|\; M_0(x) \bigcap U  \neq \emptyset\} = \{x\in K|\; M(x) \bigcap (U \bigcup \{K\setminus K_0\})\neq \emptyset\}\bigcap K_0$$ is open in $K_0$.
\end{proof}

\begin{proposition}\label{compl} Suppose  $G$-action  on $K$ is lower semi-continuous. Then $C(K)_{\pi} $ is complemented.
\end{proposition}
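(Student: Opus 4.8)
The plan is to reduce the statement about $C(K)_{\pi}$ being complemented to a known result of Wulbert on subspaces of $C(X)$ with lower semi-continuous quotient. The definition given in the paper says precisely that a lower semi-continuous $G$-action means the $C(K)_{\pi}$-partition of $K$ is lower semi-continuous. The natural candidate for the ambient subspace is $P = C(K)_{\pi}$ itself, viewed as a subspace of $C(X)$ with $X = K$.

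First I would invoke Lemma \ref{quotient}: since the $G$-action is lower semi-continuous, the $C(K)_{\pi}$-partition of $K$ is lower semi-continuous by definition, and so by Lemma \ref{quotient} the subspace $P = C(K)_{\pi}$ has lower semi-continuous quotient in the sense of \cite{Wulbert}. This packages the hypothesis into exactly the form required to apply an external complementation theorem.

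The core step is then to cite the relevant theorem from \cite{Wulbert}, which asserts that a subspace $P \subseteq C(X)$ having a lower semi-continuous quotient is complemented in $C(X)$ (this is the whole point of Wulbert's notion). Applying it to $P = C(K)_{\pi} \subseteq C(K)$ yields directly that $C(K)_{\pi}$ is complemented in $C(K)$, which is the conclusion. One should double-check a mild technical point: Wulbert's framework may require $P$ to be closed and to contain the constants, both of which hold here since $C(K)_{\pi}$ is a closed subalgebra containing the constant functions.

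I expect the main obstacle to be purely one of citation hygiene rather than mathematical content: one must verify that the precise hypotheses of Wulbert's theorem (closedness of $P$, separability or metrizability of $X$, and the exact formulation of \emph{lower semi-continuous quotient}) match the setup here, so that Lemma \ref{quotient} genuinely delivers the input that theorem needs. Once that alignment is confirmed, the proof is a two-line chain: lower semi-continuous action $\Rightarrow$ lower semi-continuous $C(K)_{\pi}$-partition (by definition) $\Rightarrow$ $C(K)_{\pi}$ has lower semi-continuous quotient (Lemma \ref{quotient}) $\Rightarrow$ $C(K)_{\pi}$ is complemented (Wulbert).
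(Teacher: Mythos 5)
Your proposal is correct and follows essentially the same route as the paper: the definition of a lower semi-continuous action feeds the hypothesis into Lemma \ref{quotient}, and the conclusion then comes from Theorem 4 of \cite{Wulbert}. The paper makes explicit the one structural point you relegate to ``citation hygiene''---namely that $C(K)_{\pi}$, being a $C^*$-subalgebra of $C(K)$, is itself isomorphic to some $C(Z)$, which is exactly the form of subspace Wulbert's theorem applies to---but you do note the closed-subalgebra-with-constants structure, so your argument matches the paper's.
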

\begin{proof} Obviously $C(K)_{\pi} $ is a $C^*$-subalgebra of $C(K)$ and hence is isomorphic to $C(Z)$, for some Hausdorff space $Z$. The statement follows now from Lemma \ref{quotient} and [Th. 4, \cite{Wulbert}].
\end{proof}.

\end{document}